\definecolor{lightgray}{gray}{0.5}
\def\frac#1#2{{\begingroup #1\endgroup\over #2}}
\begin{document}

\renewcommand{\PaperNumber}{***}

\FirstPageHeading

\ShortArticleName{Recurrence relations for orthogonal polynomials on a triangle}

\ArticleName{Recurrence relations for a family of orthogonal polynomials on a triangle}

\Author{Sheehan Olver~$^\ast$, Alex Townsend~$^\dag$, and Geoffrey M.~Vasil~$^\ddag$}

\AuthorNameForHeading{S. Olver, A. Townsend, and G. M. Vasil}

\Address{$^\ast$~Department of Mathematics, Imperial College, London SW7 2AZ, UK} 
\EmailD{\href{mailto:s.olver@imperial.ac.uk}{s.olver@imperial.ac.uk}} 

\Address{$^\dag$~Department of Mathematics, Cornell University, Ithaca, NY 14853, US}
\EmailD{\href{mailto:townsend@cornell.edu}{townsend@cornell.edu}} 

\Address{$^\ddag$~School of Mathematics and Statistics, University of Sydney, Australia}
\EmailD{\href{mailto:geoffrey.vasil@sydney.edu.au }{geoffrey.vasil@sydney.edu.au }} 


\ArticleDates{Received ???, in final form ????; Published online ????}

\Abstract{This paper derives sparse recurrence relations between orthogonal polynomials on a triangle and their partial derivatives, which are analogous to recurrence relations for Jacobi polynomials.  We derive these recurrences in a systematic fashion by introducing ladder operators that map an orthogonal polynomial to another by incrementing or decrementing its associated parameters by one.}

\Keywords{Koornwinder polynomials, orthogonal polynomials, triangle, recurrence relations}

\Classification{33D50, 65Q30} 

\def\P{\smash{{P}_{n,k}^{(a,b,c,d)}(x,y)}}
\def\PP{\smash{P_{n,k}^{(a,b,c,d)}}}
\def\Pabc{\smash{{P}_{n,k}^{(a,b,c)}(x,y)}}
\def\tPabc{{\tilde P}_{n,k}^{(a,b,c)}(x,y) }
\def\PPabc{\smash{{P}_{n,k}^{(a,b,c)}}}
\def\tPPabc{{\tilde P}_{n,k}^{(a,b,c)}}
\def\ddx{\tfrac{\partial}{\partial x}}
\def\ddy{\tfrac{\partial}{\partial y}}
\def\ddz{\tfrac{\partial}{\partial z}}
\def\dudx{\tfrac{\partial u}{\partial x}}
\def\dudy{\tfrac{\partial u}{\partial y}}
\def\dudz{\tfrac{\partial u}{\partial z}}
\def\ddxy{\tfrac{\partial^2}{\partial x\partial y}}
\def\ddyy{\tfrac{\partial^2}{\partial y^2}}
\def\ddxx{\tfrac{\partial^2}{\partial x^2}}
\def\L#1{{\cal L}_{#1}}
\def\Ld#1{{\cal L}_{#1}^\dagger}
\def\tL#1{\tilde{\cal L}_{#1}}
\def\tLd#1{\tilde{\cal L}_{#1}^\dagger}
\def\M#1{{\cal M}_{#1}}
\def\Md#1{{\cal M}_{#1}^\dagger}
\def\N#1{{\cal L}_{#1}}
\def\Nd#1{{\cal L}_{#1}^\dagger}

\section{Introduction}
In 1975, Koornwinder described a general procedure for constructing multivariate orthogonal polynomials from univariate ones~\cite{Koornwinder}. The procedure allows for the construction of seven classes of bivariate orthogonal polynomials from Jacobi polynomials, some of which were previously known~\cite{Proriol_57_01}. In this paper, we consider a four-parameter variant of the {\it Koornwinder Class IV} polynomials defined as~\cite{Dunkl_14_01}
\begin{equation} 
\begin{aligned}
\P  & = P_{n-k}^{(2k+b+c+d+1,a)}\!(2x-1)(1-x)^kP_k^{(c,b)}\!\!\left(-1+\tfrac{2y}{1-x}\right)\\
& =  \tilde{P}_{n-k}^{(2k+b+c+d+1,a)}\!(x)(1-x)^k\tilde{P}_k^{(c,b)}\!\!\left(\tfrac{y}{1-x}\right),
\end{aligned}
\label{eq:Koornwinder}
\end{equation} 
where $a,b,c>-1$, $n$ and $k$ are integers such that $n\geq k\geq 0$, $\smash{P_k^{(a,b)}(x)}$ is the Jacobi polynomial of degree $k$~\cite[Table 18.3.1]{DLMF}, and $\smash{\tilde{P}_k^{(a,b)}}$ is the Jacobi polynomial of degree $k$ shifted to have support on $(0,1)$. The Koornwinder Class IV polynomials are a special case when $d = 0$, which we denote by $\PPabc$. The polynomials in~\eqref{eq:Koornwinder} are orthogonal on the right-angled triangle $\{(x,y) : 0 < x < 1, 0 < y < 1-x\}$ with respect to the weight function $w_{a,b,c,d}(x,y) = x^a y^b (1-x-y)^c(1-x)^d$. That is, if $n\neq m$ or $k\neq \ell$ then
$$
\int_{0}^1\int_{0}^{1-x} w_{a,b,c,d}(x,y) \P P_{m,\ell}^{(a,b,c,d)}(x,y) dy dx= 0. 
$$

The introduction of the fourth parameter in~\eqref{eq:Koornwinder} allows us to schematically derive sparse recurrence relations that are satisfied by $\PPabc$ (see Section~\ref{sec:recurrences}).  To do this, we  derive ladder operators that map $\smash{\P}$ to a scalar multiple of $\smash{P_{\tilde n,\tilde k}^{(\tilde a,\tilde b,\tilde c,\tilde d)}}$, where the new parameters in $\smash{P_{\tilde n,\tilde k}^{(\tilde a,\tilde b,\tilde c,\tilde d)}}$ are $n$, $k$, $a$, $b$, $c$ or $d$, respectively, incremented or decremented by $0$ or $1$ (see Section~\ref{sec:ladder}). 

The recurrence relations can be employed to efficiently solve linear partial differential equations defined on a triangle using sparse linear algebra, analogous to the ultraspherical spectral method for solving ordinary differential equations on bounded intervals~\cite{SOATUltraspherical}. A similar idea using a hierarchy of Zernike polynomials, which are bivariate orthogonal polynomials on the unit disk, is used in~\cite{GVDisk} to develop a sparse spectral method for solving partial differential equations defined on the disk~\cite{GVDisk}. On the disk, polar coordinates allow for radially symmetric partial differential operators to be reduced to ordinary differential operators acting on Jacobi polynomials~\cite{GVDisk}. This simplification does not translate to non-radially symmetric partial differential operators on the disk, nor partial differential operators on the triangle.  

Several of the formulae in this paper have already be derived by directly employing recurrence relations satisfied by Jacobi polynomials~\cite{Xu_TA}. Our approach via ladder operators is a more systematic study that derives previously unreported recurrence relations for $\PPabc$. We also hope to use ladder operators to derive sparse recurrence relations for multivariate orthogonal polynomials built from Jacobi polynomials on higher-dimensional simplices.

Throughout this paper, the recurrence relations hold for choices of the parameters $n$, $k$, $a$, $b$, $c$, and $d$ that make the Jacobi polynomials well-defined. Moreover, we take $\smash{P_{-1}^{(a,b)}(x) = 0}$, which gives
$$
0 = P_{-1,0}^{(a,b,c,d)}(x,y) = P_{n,-1}^{(a,b,c,d)}(x,y)  = P_{n,n+1}^{(a,b,c,d)}(x,y), \qquad n\geq -1. 
$$
Also, note that orthogonal polynomials remain orthogonal after an affine transformation so the recurrence relations in this paper for~\eqref{eq:Koornwinder} on a right-angled triangle can be extended to any triangle, including   triangles with the corners permuted.

The paper is structured as follows. In the next section, we give 12 ladder operators for Jacobi polynomials and use them to derive sparse recurrence relations for $\smash{P_n^{(a,b)}}$. In Section~\ref{sec:ladder} we give 24 ladder operators for~\eqref{eq:Koornwinder} and write down the corresponding sparse recurrence relations for $\PP$. 
In Section~\ref{sec:recurrences}, we use the ladder operators to derive a collection of sparse recurrence relations for differentiation, conversion, and multiplication that are satisfied by $\PPabc$. 

\section{Ladder operators for Jacobi polynomials}\label{sec:JacobiLadder}
We give 12 ordinary differential operators that increment or decrement the parameters and degree of Jacobi polynomials by zero or one. Each ladder operator maps $\smash{P_n^{(a,b)}}(x)$ to $\smash{P_{\tilde{n}}^{(\tilde{a},\tilde{b})}}(x)$, where $|\tilde{n} - n|\leq 1$, $|\tilde{a} - a|\leq 1$, and $|\tilde{b} - b|\leq 1$. 
\begin{definition}
The following operators are ladder operators for Jacobi polynomials:
$$
\begin{aligned}
\L1u &= \tfrac{du}{dx} & \Ld1u & =  ((1+x)a - (1-x) b)u - (1-x^2) \tfrac{du}{dx} \\
\L2u &= (a+b+n+1)u + (1+x)\tfrac{du}{dx} & \Ld2u & = (2a + (1-x) n)u-(1-x^2)\tfrac{du}{dx}  \\
\L3u &= (a+b+n+1)u-(1-x)\tfrac{du}{dx} &\Ld3u &= (2b+(1+x)n)u+(1-x^2)\tfrac{du}{dx} \\
\L4u &= ((1+x)a-(1-x)(b+n+1))u-(1-x^2)\tfrac{du}{dx} &\Ld4u &=-nu+(1+x)\tfrac{du}{dx} \\
\L5u& = ((1+x)(a+n+1)-(1-x)b)u-(1-x^2)\tfrac{du}{dx} &\Ld5u&=nu+(1-x)\tfrac{du}{dx} \\
\L6u&=bu+(1+x)\tfrac{du}{dx} & \Ld6u&=au-(1-x)\tfrac{du}{dx}.
\end{aligned}
$$
\label{def:JacobiLadder}
\end{definition}

The notation for the ladder operators is chosen so that $\Ld s\L s P_n^{(a,b)}$ and $\L s\Ld s P_n^{(a,b)}$ are scalar multiples of $\smash{P_n^{(a,b)}}$ for $1\leq s\leq 6$. These ladder operators are carefully constructed to give rise to sparse recurrence relations for Jacobi polynomials. 

\begin{lemma}
The ladder operators give sparse recurrence relations for Jacobi polynomials: 
$$
\begin{aligned}
\L1 P_n^{(a,b)} &= \tfrac{1}{2}(n+a+b+1) P_{n-1}^{(a+1,b+1)} & \Ld1 P_n^{(a,b)} &= 2(n+1) P_{n+1}^{(a-1,b-1)} \\
\L2 P_n^{(a,b)} &= (n+a+b+1) P_{n}^{(a+1,b)} & \Ld2 P_n^{(a,b)} &= 2(n+a) P_{n}^{(a-1,b)}\\
\L3 P_n^{(a,b)} &= (n+a+b+1) P_{n}^{(a,b+1)} & \Ld3 P_n^{(a,b)} &= 2(n+b) P_{n}^{(a,b-1)}\\
\L4 P_n^{(a,b)} &= 2(n+1) P_{n+1}^{(a-1,b)} & \Ld4 P_n^{(a,b)}&= (n+b) P_{n-1}^{(a+1,b)}\\
\L5 P_n^{(a,b)} &= 2(n+1) P_{n+1}^{(a,b-1)} & \Ld5 P_n^{(a,b)} &= (n+a) P_{n-1}^{(a,b+1)}\\
\L6 P_n^{(a,b)} &= (n+b) P_{n}^{(a+1,b-1)} & \Ld6 P_n^{(a,b)}& = (n+a) P_{n}^{(a-1,b+1)}.
\end{aligned}
$$
\label{lem:JacobiRecurrences}
\end{lemma}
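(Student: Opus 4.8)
The plan is to reduce all twelve identities to two classical first-order differential identities for Jacobi polynomials together with the reflection symmetry $P_n^{(a,b)}(x) = (-1)^n P_n^{(b,a)}(-x)$. The two master identities are the \emph{forward shift} (differentiation) formula $\frac{d}{dx} P_n^{(a,b)} = \tfrac12(n+a+b+1)P_{n-1}^{(a+1,b+1)}$ and the \emph{backward shift} formula; both are standard \cite{DLMF}. The forward shift is literally the claim for $\L1$, and the backward shift, written out with polynomial coefficients, is precisely the claim $\Ld1 P_n^{(a,b)} = 2(n+1)P_{n+1}^{(a-1,b-1)}$ of Definition~\ref{def:JacobiLadder}, so these two cases are immediate.

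Next I would exploit symmetry to cut the remaining work roughly in half. Under the substitution $x\mapsto -x$, the reflection relation interchanges $a$ and $b$ and sends $\frac{d}{dx}\mapsto -\frac{d}{dx}$ and $(1\pm x)\mapsto(1\mp x)$ while fixing $(1-x^2)$. Comparing the operators in Definition~\ref{def:JacobiLadder} shows that this transformation carries the identity for $\L2$ to that for $\L3$, and likewise pairs $\Ld2$ with $\Ld3$, $\L4$ with $\L5$, $\Ld4$ with $\Ld5$, and $\L6$ with $\Ld6$, the global factors $(-1)^n$ appearing consistently on both sides; it fixes $\L1$ and $\Ld1$. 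It therefore suffices to establish the seven representatives $\L1$, $\Ld1$, $\L2$, $\Ld2$, $\L4$, $\Ld4$, $\L6$, the other five following by applying the symmetry to both sides.

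For the five nontrivial representatives I would substitute the appropriate master identity into the definition and collect terms. For $\L2$, rewriting $(1+x)\frac{d}{dx}P_n^{(a,b)}$ by the forward shift reduces the claim to the single-parameter contiguous relation $P_n^{(a+1,b)} = P_n^{(a,b)} + \tfrac12(1+x)P_{n-1}^{(a+1,b+1)}$. For $\Ld2$, substituting the backward shift collapses the coefficient of $P_n^{(a,b)}$ to $(a+b+n)(1-x)$ and reduces the claim to $(a+b+n)(1-x)P_n^{(a,b)} + 2(n+1)P_{n+1}^{(a-1,b-1)} = 2(n+a)P_n^{(a-1,b)}$. The operators $\L4$, $\Ld4$, $\L6$ shift the degree and one parameter simultaneously, and after the same substitution each reduces to a three-term contiguous relation linking $P_n^{(a,b)}$, $P_{n-1}^{(a+1,b+1)}$ and a polynomial with a single shifted parameter and a shifted degree; for instance $\L6$ reduces to a relation among $P_n^{(a,b)}$, $P_{n-1}^{(a+1,b+1)}$ and $P_n^{(a+1,b-1)}$.

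The main obstacle is supplying the contiguous relations for the degree-and-parameter-shifting cases ($\L4$, $\Ld4$, $\L6$, and by symmetry $\L5$, $\Ld5$, $\Ld6$), which are not all in the short standard list. I would obtain them uniformly by treating the contiguous-relation algebra as generated by the two single-parameter shifts at fixed degree together with the three-term recurrence: any relation among $P_n^{(a,b)}$, $P_{n\pm1}^{(a',b')}$ and $P_n^{(a',b')}$ is a consequence of these generators, so eliminating the unwanted terms yields the required identity. As a fallback, each of the twelve identities can be verified directly by inserting the hypergeometric series for $P_n^{(a,b)}$, applying the first-order operator term by term and re-indexing; this is mechanical but long, so I would present the symmetry reduction with citations to the classical relations and verify in each case only the leading coefficient and the parameter shifts to pin down the normalizing constants.
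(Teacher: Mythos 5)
Your route is genuinely different from the paper's. The paper disposes of all twelve identities at once by an orthogonality argument: each operator $\L s$ (resp.\ $\Ld s$) maps $P_n^{(a,b)}$ to a polynomial of the asserted degree which, after integration by parts against the target weight $(1-x)^{\tilde a}(1+x)^{\tilde b}$, is seen to annihilate all lower-degree polynomials; it is therefore a scalar multiple of $P_{\tilde n}^{(\tilde a,\tilde b)}$, and the scalar is fixed by comparing normalizations (this is also why the operators come in adjoint pairs $\L s$, $\Ld s$). You instead take the two classical shift formulas ($\L1$, $\Ld1$) as given, use the reflection $P_n^{(a,b)}(x)=(-1)^nP_n^{(b,a)}(-x)$ to pair off $\L2/\L3$, $\Ld2/\Ld3$, $\L4/\L5$, $\Ld4/\Ld5$, $\L6/\Ld6$, and reduce the surviving representatives to contiguous relations. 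The symmetry reduction is sound (the parity factor works out even in the degree-raising cases, where the extra $(-1)$ from $n\mapsto n+1$ is absorbed correctly), and it is an economy the paper does not exploit. What the paper's method buys is uniformity and self-containedness: one argument, no external contiguous relations, and the normalization constant falls out of the same computation. What your method buys is that two of the twelve identities are literally textbook formulas and five more are free by symmetry.

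The soft spot is the remaining five cases. Reducing, say, $\L2$ to $P_n^{(a+1,b)}=P_n^{(a,b)}+\tfrac12(1+x)P_{n-1}^{(a+1,b+1)}$ merely trades one unproved identity for another, and your justification --- that the contiguous-relation algebra is generated by the single-parameter shifts and the three-term recurrence, so ``eliminating the unwanted terms yields the required identity'' --- asserts existence of \emph{some} relation of the right shape without verifying that the elimination produces \emph{this} relation with \emph{these} coefficients, which is the entire content of the lemma. Checking only the leading coefficient afterwards does not close this: that pins down the constant only once you already know the left-hand side is a scalar multiple of a single Jacobi polynomial, which is exactly what the orthogonality argument (or a fully executed elimination) supplies. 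Either carry out the five eliminations explicitly, or fall back on the orthogonality-plus-normalization argument the paper uses; your hypergeometric-series fallback would also work but forfeits the brevity that motivated the symmetry reduction in the first place.
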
	
\begin{proof}
The recurrence relations can be easily verified by showing that the left-hand side has the same orthogonality properties and normalization constant as the right-hand side.
\end{proof} 

The sparse recurrence relations corresponding to $\L1$ and $\Ld1$ are already known: one is a formula for the derivative of $\smash{P_n^{(a,b)}(x)}$~\cite[18.9.15]{DLMF} and the other is equivalent to~\cite[18.9.16]{DLMF}. We have not found the other 10 recurrence relations in the literature.  Figure~\ref{fig:LadderDiagram} illustrates the ladder operators and how they increment or decrement the parameters associated to a Jacobi polynomial. 
\begin{figure} 
\centering 
\begin{minipage}{.32\textwidth} 
\centering
\begin{tikzpicture} 
\draw[thick,black,->] (1,1) -- (0,1);
\node(a1) at (.5,1) {{\footnotesize{$\L4$}}};
\draw[thick,black,->] (1,1) -- (0,0);
\node(a1) at (.5,.5) {{\footnotesize{$\Ld1$}}};
\draw[thick,black,->] (1,1) -- (1,0);
\node(a1) at (1,.5) {{\footnotesize{$\L5$}}};
\node(a) at (2,2) {};
\draw[dashed,-] (0,0) -- (2,0) -- (2,2) -- (0,2) -- (0,0);
\draw[dashed,-] (1,0) -- (1,2);
\draw[dashed,-] (0,1) -- (2,1);
\node(b) at (1,-.3) {\footnotesize{$a$}};
\node(b) at (2,-.3) {\footnotesize{$a+1$}};
\node(b) at (0,-.3) {\footnotesize{$a-1$}};
\node(b) at (-.5,1) {\footnotesize{$b$}};
\node(b) at (-.5,2) {\footnotesize{$b+1$}};
\node(b) at (-.5,0) {\footnotesize{$b-1$}};
\node(b) at (1,2.5) {\footnotesize{$P_n^{(a,b)}\rightarrow P_{n-1}^{(\tilde{a},\tilde{b})}$}};
\end{tikzpicture} 
\end{minipage} 
\begin{minipage}{.32\textwidth} 
\centering
\begin{tikzpicture} 
\draw[thick,black,->] (1,1) -- (2,0);
\node(a1) at (1.5,.5) {{\footnotesize{$\L6$}}};
\draw[thick,black,->] (1,1) -- (0,2);
\node(a1) at (.5,1.5) {{\footnotesize{$\Ld6$}}};
\draw[thick,black,->] (1,1) -- (0,1);
\node(a1) at (.5,1) {{\footnotesize{$\Ld2$}}};
\draw[thick,black,->] (1,1) -- (2,1);
\node(a1) at (1.5,1) {{\footnotesize{$\L2$}}};
\draw[thick,black,->] (1,1) -- (1,2);
\node(a1) at (1,1.5) {{\footnotesize{$\L3$}}};
\draw[thick,black,->] (1,1) -- (1,0);
\node(a1) at (1,.5) {{\footnotesize{$\Ld3$}}};
\draw[dashed,-] (0,0) -- (2,0) -- (2,2) -- (0,2) -- (0,0);
\draw[dashed,-] (1,0) -- (1,2);
\draw[dashed,-] (0,1) -- (2,1);
\node(b) at (1,-.3) {\footnotesize{$a$}};
\node(b) at (2,-.3) {\footnotesize{$a+1$}};
\node(b) at (0,-.3) {\footnotesize{$a-1$}};
\node(b) at (-.5,1) {\footnotesize{$b$}};
\node(b) at (-.5,2) {\footnotesize{$b+1$}};
\node(b) at (-.5,0) {\footnotesize{$b-1$}};
\node(b) at (1,2.5) {\footnotesize{$P_n^{(a,b)}\rightarrow P_n^{(\tilde{a},\tilde{b})}$}};
\end{tikzpicture} 
\end{minipage} 
\begin{minipage}{.32\textwidth} 
\centering
\begin{tikzpicture} 
\draw[thick,black,->] (1,1) -- (2,1);
\node(a1) at (1.5,1) {{\footnotesize{$\Ld4$}}};
\draw[thick,black,->] (1,1) -- (2,2);
\node(a1) at (1.5,1.5) {{\footnotesize{$\L1$}}};
\draw[thick,black,->] (1,1) -- (1,2);
\node(a1) at (1,1.5) {{\footnotesize{$\Ld5$}}};
\node(a) at (0,0) {};
\draw[dashed,-] (0,0) -- (2,0) -- (2,2) -- (0,2) -- (0,0);
\draw[dashed,-] (1,0) -- (1,2);
\draw[dashed,-] (0,1) -- (2,1);
\node(b) at (1,-.3) {\footnotesize{$a$}};
\node(b) at (2,-.3) {\footnotesize{$a+1$}};
\node(b) at (0,-.3) {\footnotesize{$a-1$}};
\node(b) at (-.5,1) {\footnotesize{$b$}};
\node(b) at (-.5,2) {\footnotesize{$b+1$}};
\node(b) at (-.5,0) {\footnotesize{$b-1$}};
\node(b) at (1,2.5) {\footnotesize{$P_n^{(a,b)}\rightarrow P_{n+1}^{(\tilde{a},\tilde{b})}$}};
\end{tikzpicture} 
\end{minipage} 
\caption{Illustration of the 12 ladder operators for Jacobi polynomials in Definition~\ref{def:JacobiLadder}. }
\label{fig:LadderDiagram}
\end{figure}
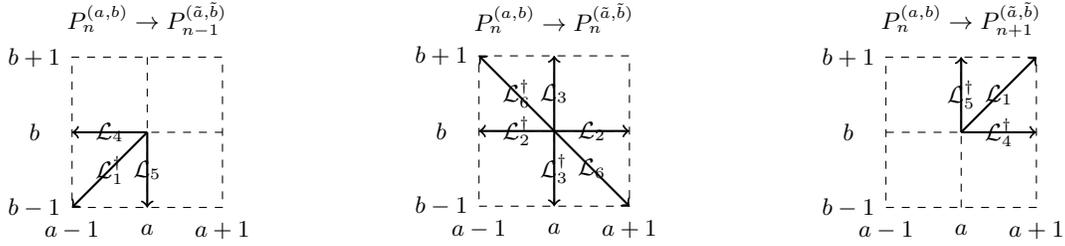 

The ladder operators can be easily adapted to the shifted Jacobi polynomials, denoted by $\tilde{P}_k^{(a,b)}$, which are supported on $(0,1)$. 
\begin{definition}
The following operators are ladder operators for shifted Jacobi polynomials:
$$
\begin{aligned}
\tL1u &= \tfrac{du}{dx} & \tLd1u & =  (xa - (1-x) b)u - x(1-x) \tfrac{du}{dx} \\
\tL2u &= (a+b+n+1)u + (1+x)\tfrac{du}{dx} & \tLd2u & = (a + (1-x) n)u-x(1-x)\tfrac{du}{dx}  \\
\tL3u &= (a+b+n+1)u-(1-x)\tfrac{du}{dx} &\tLd3u &= (b+(1+x)n)u+x(1-x)\tfrac{du}{dx} \\
\tL4u &= (xa-(1-x)(b+n+1))u-x(1-x)\tfrac{du}{dx} &\tLd4u &=-nu+(1+x)\tfrac{du}{dx} \\
\tL5u &= (x(a+n+1)-(1-x)b)u-x(1-x)\tfrac{du}{dx}  &\tLd5u&=nu+(1-x)\tfrac{du}{dx} \\
\tL6u&=bu+x\tfrac{du}{dx} &\tLd6u&=au-(1-x)\tfrac{du}{dx}.
\end{aligned} 
$$
\label{def:JacobiLadder2}
\end{definition}
The corresponding recurrence relations for $\tilde{P}_k^{(a,b)}$ are the same as in Lemma~\ref{lem:JacobiRecurrences}, except the multiplicative factors of $\tfrac{1}{2}$ and $2$ are replaced by $1$. 

\section{Ladder operators for $\PP$}\label{sec:ladder}
The 12 ladder operators for the Jacobi polynomials in Section~\ref{sec:JacobiLadder} allow us to derive 24 ladder operators for $\PP$. The ladder operators are carefully defined so that they map $\smash{\PP}$ to a scalar multiple of $\smash{P_{\tilde n,\tilde k}^{(\tilde a,\tilde b,\tilde c,\tilde d)}}$, where the new parameters in $\smash{P_{\tilde n,\tilde k}^{(\tilde a,\tilde b,\tilde c,\tilde d)}}$ are $n$, $k$, $a$, $b$, $c$ or $d$, respectively, incremented or decremented by $0$ or $1$.

To highlight the symmetries of the right-angled triangle and make the recurrences more convenient to write down, we define the variable  $z = 1-x-y$ and  
$$
\ddz = \ddy - \ddx,
$$
as in \cite{Xu_TA}. Now, the variables $x$, $y$, and $z$ have the convenient property that any affine transformation that maps the triangle onto itself has the effect of exchanging the roles of $x$, $y$, and $z$.

\begin{definition} 
The following operators are ladder operators for $\PP$. The first set of 12 are:
$$
\begin{aligned}
\M{0,1}u& = \dudy &   \Md{0,1}u &= (y c - z  b)u - y z\dudy \\
\M{0,2}u &=(k+b+c+1)u+y\dudy & \Md{0,2}u &= \left(c+k-\tfrac{y k}{1-x}\right)u - \tfrac{y}{1-x} z \dudy \\
\M{0,3}u &=(k+b+c+1)u-x\dudy & \Md{0,3}u&=\left(b+\tfrac{k y}{1-x}\right)u + \tfrac{y}{1-x} z \dudy \\
\M{0,4}u &= (y c - z (b + k + 1))u  - yz \dudy & \Md{0,4}u &=-\tfrac{k}{1-x}u + \tfrac{y}{1-x} \dudy \\
\M{0,5}u&= (y (c + k + 1))u -  z b - yz \dudy & \Md{0,5}u&= \tfrac{k}{1-x}u + \left(1-\tfrac{y}{1-x}\right) \dudy \\
\M{0,6}u & = cu-z \dudy & \Md{0,6}u& = bu+y\dudy.\\
\end{aligned}
$$
The second set of 12 are:
$$
\begin{aligned}
\M{1,0}u &= \tfrac{k}{1-x}u + \dudx -\tfrac{y}{1-x} \dudy \\
\Md{1,0}u &= (x (k + a + b + c + d + 1) - a)u  - x (1 - x) \dudx+ x y \dudy \\
\M{2,0}u &= (n+ k + a + b + c + d + 2)u + \tfrac{x k}{1-x}u + x \dudx - \tfrac{x y}{1 - x} \dudy \\ 
\Md{2,0}u & = (n + k + b + c + d + 1 - x n)u - x (1 - x) \dudx+ x y \dudy \\
\M{3,0}u &= (n+a+b+c+d+2)u-(1-x) \dudx + y \dudy \\
\Md{3,0}u &= (a+xn)u+x(1-x)\dudx -x y \dudy \\
\M{4,0}u &= (x (n + a + b + c + d + 2) - a - n + k - 1)u - x (1 - x) \dudx+  x y \dudy \\
\Md{4,0}u &=\tfrac{k}{1-x}u - nu  + x \dudx - \tfrac{x y}{1-x}\dudy\\
\M{5,0}u &= nu+(1-x) \dudx- y \dudy \\
\Md{5,0}u &= x(n+a+b+c+d+2)u-au-x(1-x) \dudx +x y \dudy \\
\M{6,0}u &= au+\tfrac{x k}{1-x}u + x \dudx - \tfrac{xy}{1-x} \dudy \\
\Md{6,0}u &= (k+b+c+d+1)u-(1-x)\dudx + y \dudy. \\
\end{aligned}
$$
\label{def:Pladder}
\end{definition} 

The notation for the ladder operators is chosen so that the recurrence relations in Theorem~\ref{thm:ladder} are derived for $\M{s,0}$ (resp.~$\M{0,s}$) by applying $\L s$ or $\Ld s$ to the first (resp.~second) Jacobi polynomial in $\P$ for $1\leq s\leq 6$.  Moreover, we know that $\smash{\Md{s,0}\M{s,0} \PP}$, $\smash{\Md{0,s}\M{0,s} \PP}$, $\smash{\M{s,0}\Md{s,0} \PP}$, and $\smash{\M{0,s}\Md{0,s} \PP}$ are scalar multiples of $\smash{\PP}$ for $1\leq s\leq 6$. 

\begin{theorem}\label{thm:ladder}
The ladder operators in Definition~\ref{def:Pladder} correspond to 24 sparse recurrence relations for $\PP$. Let $t = a+b+c+d$. The first set of 12 are:
$$
\begin{aligned}
\M{0,1} \PP &= (k+b+c+1) {P}_{n-1,k-1}^{(a,b+1,c+1,d)} & \Md{0,1}\PP = (k+1) {P}_{n+1,k+1}^{(a,b-1,c-1,d)} \\
\M{0,2}\PP &= (k+b+c+1) {P}_{n,k}^{(a,b,c+1,d-1)} & \Md{0,2}\PP = (k+c) {P}_{n,k}^{(a,b,c-1,d+1)} \\
\M{0,3}\PP &= (k+b+c+1) {P}_{n,k}^{(a,b+1,c,d-1)} & \Md{0,3}\PP = (k+b) {P}_{n,k}^{(a,b-1,c,d+1)}   \\
\M{0,4}\PP &= (k+1) {P}_{n+1,k+1}^{(a,b,c-1,d-1)} & \Md{0,4}\PP = (k+b) {P}_{n-1,k-1}^{(a,b,c+1,d+1)}  \\
\M{0,5}\PP &= (k+1) {P}_{n+1,k+1}^{(a,b-1,c,d-1)} & \Md{0,5}\PP = (k+c) {P}_{n-1,k-1}^{(a,b+1,c,d+1)} \\ 
\M{0,6} \PP &= (k+c) {P}_{n,k}^{(a,b+1,c-1,d)} & \Md{0,6} \PP = (k+b) {P}_{n,k}^{(a,b-1,c+1,d)}.
\end{aligned}
$$
The second set of 12 are: 
$$
\begin{aligned}
\M{1,0}\PP &= (n+k+t+2) {P}_{n-1,k}^{(a+1,b,c,d+1)} & \Md{1,0}\PP &= (n-k+1) {P}_{n+1,k}^{(a-1,b,c,d-1)} \\
\M{2,0}\PP &= (n+k+t+2) {P}_{n,k}^{(a,b,c,d+1)} & \Md{2,0}\PP &=(n+k+t-a+1) {P}_{n,k}^{(a,b,c,d-1)}\\
\M{3,0} \PP &= (n+k+t+2) {P}_{n,k}^{(a+1,b,c,d)} & \Md{3,0} \PP &= (n-k+a) {P}_{n,k}^{(a-1,b,c,d)} \\
\M{4,0}\PP &= (n-k+1) {P}_{n+1,k}^{(a,b,c,d-1)} & \Md{4,0}\PP &= (n-k+a) {P}_{n-1,k}^{(a,b,c,d+1)} \\
\M{5,0}\PP &= (n+k+t-a+1) {P}_{n-1,k}^{(a+1,b,c,d)} & \Md{5,0} \PP &= (n-k+1) {P}_{n+1,k}^{(a-1,b,c,d)}  \\
\M{6,0}\PP &= (n-k+a) {P}_{n,k}^{(a-1,b,c,d+1)} & \Md{6,0}\PP &= (n+k+t-a+1) {P}_{n,k}^{(a+1,b,c,d-1)}.
\end{aligned}
$$
\end{theorem}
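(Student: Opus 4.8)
The plan is to exploit the tensor-product factorization in~\eqref{eq:Koornwinder} and reduce each of the $24$ operators to one of the shifted Jacobi ladder operators of Definition~\ref{def:JacobiLadder2} acting on a single factor, after which the result follows from the shifted form of Lemma~\ref{lem:JacobiRecurrences}. Write $\P = F(x)\,(1-x)^k f(\eta)$, where $F(x) = \tilde{P}_{n-k}^{(2k+b+c+d+1,a)}(x)$, $f = \tilde{P}_k^{(c,b)}$, and $\eta = y/(1-x)$. The whole scheme rests on the observation that the first factor depends on the four parameters only through the degree $N = n-k$ and the pair $(A,B) = (2k+b+c+d+1,\,a)$, while the second factor depends only on $(k,c,b)$.

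First I would treat the set $\M{s,0}$ written with small index on the right, that is $\M{0,s}$ and $\Md{0,s}$, which is easier because only $\ddy$ appears. Holding $x$ fixed, the chain rule gives $\ddy = (1-x)^{-1}\tfrac{d}{d\eta}$, while $\eta = y/(1-x)$ and $1-\eta = z/(1-x)$. Substituting these into each operator and extracting the power $(1-x)^{k'}$ demanded by the target, where $k'\in\{k-1,k,k+1\}$, I would check that $\M{0,s}\P = F(x)\,(1-x)^{k'}\,\tL s f(\eta)$ and $\Md{0,s}\P = F(x)\,(1-x)^{k'}\,\tLd s f(\eta)$; the combinations $yc-zb$, $y$, $z$, and $\tfrac{y}{1-x}$ appearing in Definition~\ref{def:Pladder} are precisely what the substitution produces. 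The shifted recurrences then act on the second factor $\tilde{P}_k^{(c,b)}$, changing its degree and parameters $(k,c,b)$ exactly as recorded in Lemma~\ref{lem:JacobiRecurrences}, while $n$ and $d$ adjust so as to leave both $N = n-k$ and $A = 2k+b+c+d+1$ invariant. Hence $F(x)$ is literally unchanged and the output is the stated $P_{\tilde n,\tilde k}^{(\tilde a,\tilde b,\tilde c,\tilde d)}$.

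The set $\M{s,0}$ and $\Md{s,0}$ is the crux, because $x$ now enters $\P$ in three places: through $F(x)$, through the prefactor $(1-x)^k$, and through $\eta$. The device is to form the combination $\ddx - \tfrac{y}{1-x}\ddy$, which differentiates at constant $\eta$, so that $\big(\ddx - \tfrac{y}{1-x}\ddy + \tfrac{k}{1-x}\big)\P = F'(x)\,(1-x)^k f(\eta)$ isolates the derivative of the first factor with the prefactor restored. I would rewrite each $\M{s,0}$ and $\Md{s,0}$ in terms of this combination together with multiplication operators, collect the powers of $(1-x)$ and the $-k(1-x)^{k-1}$ term coming from differentiating $(1-x)^k$, and thereby identify $\M{s,0}\P = (1-x)^k f(\eta)\,\tL s F(x)$ and $\Md{s,0}\P = (1-x)^k f(\eta)\,\tLd s F(x)$. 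The essential bookkeeping point is that, since $k$, $b$, and $c$ are held fixed throughout this family, a unit change in the first Jacobi parameter $A = 2k+b+c+d+1$ is realized entirely by $d \mapsto d\pm1$; this is precisely why the fourth parameter was introduced, and it lets the shifted recurrences reproduce the target parameters. One must also track the constants: for instance in $\M{3,0}$ the $+k\P$ generated when the prefactor is reassembled converts the operator's coefficient $n+a+b+c+d+2$ into the stated $n+k+t+2$.

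The main obstacle is this last bookkeeping in the $\M{s,0}$ family: expressing each operator through $\ddx - \tfrac{y}{1-x}\ddy$, correctly accounting for the product-rule term from $(1-x)^k$, and checking that the scalar prefactors agree with the shifted Jacobi coefficients of Lemma~\ref{lem:JacobiRecurrences} re-expressed via $N=n-k$, $A=2k+b+c+d+1$, and $B=a$. Once every operator has been matched to $\tL s$ or $\tLd s$ acting on the correct factor against the correct power of $(1-x)$, all $24$ identities follow at once; the $x\leftrightarrow y\leftrightarrow z$ symmetry of the triangle can then be invoked to transfer several verifications between the two families and cut down the number of independent computations.
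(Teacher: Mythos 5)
Your proposal is correct and follows essentially the same route as the paper: the paper proves the representative case $\M{0,1}$ by reducing to the shifted Jacobi ladder operator $\tL1$ acting on the second factor via the chain rule, and handles the remaining cases with the identities~\eqref{JJp} and~\eqref{JpJ} (your combination $\ddx - \tfrac{y}{1-x}\ddy + \tfrac{k}{1-x}$ is exactly~\eqref{JpJ} divided by $1-x$). Your explicit bookkeeping of the invariants $N=n-k$ and $A=2k+b+c+d+1$, and of the role of $d$ in absorbing unit changes of $A$, is a correct and slightly more detailed account of what the paper leaves as ``the manipulations for the remaining recurrence relations are similar.''
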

\begin{proof}
We present the proof of $\smash{\M{0,1} \PP = (k+b+c+1) {P}_{n-1,k-1}^{(a,b+1,c+1,d)}}$. By the definition of $\PP$ in~\eqref{eq:Koornwinder}, the chain rule, and the relationship in~\eqref{JJp}, we have
\begin{equation}
\begin{aligned} 
\ddy P_{n,k}^{(a,b,c,d)}(x,y) &= \tilde P_{n-k}^{(2k+b+c+d+1,a)}(x)(1-x)^{k-1}  [\tilde P_k^{(c,b)}]'\left(\tfrac{y}{1-x}\right)\\
& = (k+c+b+1)  \tilde P_{n-k}^{(2k+b+c+d+1,a)}(x)(1-x)^{k-1} \tilde P_{k-1}^{(c+1,b+1)}\left(\tfrac{y}{1-x}\right),
\end{aligned} 
\label{eq:derivation1} 
\end{equation} 
where the last equality comes from applying $\tL1$ in Definition~\ref{def:JacobiLadder2}. The final expression in~\eqref{eq:derivation1} is equivalent to $\smash{(k+b+c+1)P_{n-1,k-1}^{(a,b+1,c+1,d)}}$. The manipulations for the remaining recurrence relations are similar, except with different choices of the operators $\tL s$ or $\tLd s$ and combinations of~\eqref{JJp} and~\eqref{JpJ}.  
\end{proof}

\section{Sparse recurrence relations for $\PPabc$}\label{sec:recurrences}
We can combine the ladder operators in Section~\ref{sec:ladder} to derive sparse recurrence relations between Koornwinder polynomials with different parameters and their partial derivatives. These recurrence relations are analogous to many of the sparse recurrence relations for Jacobi polynomials~\cite[\S18.9]{DLMF}. 

\subsection{Differentiation}
The partial derivatives of $\PPabc$ can be written in terms of Koornwinder polynomials with incremented parameters, which is analogous to a recurrence relation for the derivative of a Jacobi polynomial~\cite[18.9.15]{DLMF}. A similar recurrence for $\PPabc$ can be found in~\cite[Prop.~4.6, 4.7, \& 4.8]{Xu_TA}. 
\begin{corollary}
The following recurrence relations hold:
\begin{align}
(2k+b+c+1) \ddx \PPabc &= (n+k+a+b+c+2)(k+b+c+1){P}_{n-1,k}^{(a+1,b,c+1)} \notag\\
&\qquad  \qquad + (k+b)(n+k+b+c+1){P}_{n-1,k-1}^{(a+1,b,c+1)}, \label{ddx} \\
\ddy \PPabc  &= (k + b + c + 1) {P}_{n-1,k-1}^{(a,b+1,c+1)},  \label{ddy}\\
(2k+b+c+1)\ddz\PPabc  & = -(n+k+a+b+c+2) (k+b+c+1) {P}_{n-1,k}^{(a+1,b+1,c)} \notag \\
   & \qquad \qquad + (k+c)(n+k+b+c+1) {P}_{n-1,k-1}^{(a+1,b+1,c)}.    \label{ddz}
\end{align}
\end{corollary}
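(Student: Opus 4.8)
The plan is to derive each relation by composing the ladder operators of Theorem~\ref{thm:ladder} (specialized to $d=0$) and recognizing that the resulting differential operator collapses onto a single partial derivative. Relation~\eqref{ddy} is immediate: since $\M{0,1}u = \ddy u$, it is exactly the recurrence $\M{0,1}\PP = (k+b+c+1)P_{n-1,k-1}^{(a,b+1,c+1,d)}$ read off at $d=0$.

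For~\eqref{ddx}, first I would identify the two right-hand terms as outputs of ladder compositions. Applying $\M{1,0}$ raises $a$ and $d$ (and lowers $n$), giving $\M{1,0}\PPabc = (n+k+a+b+c+2)P_{n-1,k}^{(a+1,b,c,1)}$; following with $\M{0,2}$, which raises $c$ and lowers $d$ back to $0$, produces $\M{0,2}\M{1,0}\PPabc = (n+k+a+b+c+2)(k+b+c+1)P_{n-1,k}^{(a+1,b,c+1)}$, the first term. Likewise $\Md{0,4}$ lowers $(n,k)$ while raising $(c,d)$, and $\Md{6,0}$ then raises $a$ and lowers $d$, giving $\Md{6,0}\Md{0,4}\PPabc = (k+b)(n+k+b+c+1)P_{n-1,k-1}^{(a+1,b,c+1)}$, the second term. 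In each composition the outer operator is instantiated with the parameters of the intermediate polynomial; for instance $\Md{6,0}$ acquires the prefactor $k+b+c+2$. Hence the right-hand side of~\eqref{ddx} equals $\M{0,2}\M{1,0}\PPabc + \Md{6,0}\Md{0,4}\PPabc$, and the relation follows once I establish the operator identity
\[
\M{0,2}\M{1,0} + \Md{6,0}\Md{0,4} = (2k+b+c+1)\ddx .
\]

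For~\eqref{ddz} I would proceed identically, replacing $\M{0,2}$ by $\M{0,3}$ (which raises $b$ instead of $c$) and $\Md{0,4}$ by $\Md{0,5}$, so that the two pieces reproduce the $P_{n-1,k}^{(a+1,b+1,c)}$ and $P_{n-1,k-1}^{(a+1,b+1,c)}$ terms; here I must verify
\[
-\M{0,3}\M{1,0} + \Md{6,0}\Md{0,5} = (2k+b+c+1)\ddz ,
\]
the leading sign accounting for $\ddz = \ddy - \ddx$. Alternatively,~\eqref{ddz} can be obtained from~\eqref{ddx} and~\eqref{ddy} using the affine symmetry of the triangle that exchanges $y$ and $z$ (equivalently $b$ and $c$).

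The main obstacle is verifying the two operator identities. Each composition is a priori a second-order operator whose coefficients are rational in $x$ (with $\tfrac{1}{1-x}$ and $\tfrac{1}{(1-x)^2}$ poles) and involve $y$; I must check that, after instantiating the intermediate parameters, the $u$, $\ddy u$, $\ddyy u$, and $\ddxy u$ contributions cancel between the two summands and the singular $\tfrac{1}{1-x}$ terms drop out, leaving exactly $(2k+b+c+1)$ times $\ddx$ (resp.\ $\ddz$). A quick sanity check on the $\ddx u$ coefficient already yields $k+b+c+1$ from $\M{0,2}\M{1,0}$ and $k$ from $\Md{6,0}\Md{0,4}$, summing to the predicted $2k+b+c+1$, and the $\ddxy u$ and $\ddyy u$ terms visibly cancel in pairs; carrying this bookkeeping through the remaining mixed and lower-order terms is the routine but delicate part of the argument.
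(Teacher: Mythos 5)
Your proposal is correct and follows essentially the same route as the paper: identify each right-hand term as a two-step ladder composition from Theorem~\ref{thm:ladder} (the paper uses $\M{1,0}\M{0,2}+\Md{0,4}\Md{6,0}$ and $\M{1,0}\M{0,3}-\Md{0,5}\Md{6,0}$, i.e.\ your compositions with the factors in the other order, which here yield the same operators) and then check that the composite collapses to $(2k+b+c+1)$ times the relevant first-order derivative. Your explicit attention to instantiating the outer operator with the intermediate polynomial's parameters is exactly the bookkeeping the paper's proof leaves implicit.
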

\begin{proof}
The recurrence~\eqref{ddx} follows from the fact that $(\M{1,0}\M{0,2} + \Md{0,4}\Md{6,0})u = (2k+b+c+1) \dudx$ when $d = 0$. The relationship~\eqref{ddy} is equivalent to the relation given by $\M{0,1}$ in Theorem~\ref{thm:ladder} when $d = 0$. Finally,~\eqref{ddz} follows from the fact that $(\M{1,0}\M{0,3} - \Md{0,5}\Md{6,0})u = -(2k+b+c+1) \dudz$.
\end{proof}

The derivatives of weighted versions of $\PPabc$ also satisfy sparse recurrence relations, which are analogous to an expression for the derivative of a weighted Jacobi polynomial~\cite[18.9.16]{DLMF}.
\begin{corollary}
The following recurrence relations hold:
$$
\begin{aligned}
	-(2k+b+c+1)\ddx\!\left(x^a y^b z^c \PPabc\right) &= x^{a-1}y^b z^{c-1}\Big( (k+c)(n-k+1){P}_{n+1,k}^{(a-1,b,c-1)} \\
	&\qquad\qquad\qquad + (k+1)(n-k+a){P}_{n+1,k+1}^{(a-1,b,c-1)}\Big), \\
	\ddy\!\left(x^a y^b z^c \PPabc\right) &= -(k+1)x^ay^{b-1} z^{c-1} {P}_{n+1,k+1}^{(a,b-1,c-1)}, \\
	(2k+b+c+1) \ddz\! \left(x^a y^b z^c \PPabc\right) &=  x^{a-1}y^{b-1} z^c \Big((k+b)(n-k+1){P}_{n+1,k}^{(a-1,b-1,c)} \\
	&\qquad\qquad\qquad - (k+1)(n-k+a){P}_{n+1,k+1}^{(a-1,b-1,c)}\Big).
\end{aligned} 
$$
\end{corollary}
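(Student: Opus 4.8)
The plan is to mirror the proof of the preceding (unweighted) differentiation corollary: reduce each weighted derivative to a combination of the \emph{daggered} ladder operators of Definition~\ref{def:Pladder} applied to the unweighted polynomial, and then read off the right-hand side from the $d=0$ case of Theorem~\ref{thm:ladder}. The role of the daggered operators here is the exact analogue of the Jacobi identity~\cite[18.9.16]{DLMF}, where differentiating the weighted polynomial corresponds to the operator $\Ld1$ rather than $\L1$.

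The first step is the elementary product-rule reduction that strips off the weight. Writing $z=1-x-y$ and using $\ddx z=\ddy z=-1$, one obtains the pointwise identities
\[
\begin{aligned}
\ddy(x^a y^b z^c u) &= x^a y^{b-1} z^{c-1}\bigl((bz-cy)u + yz\,\dudy\bigr),\\
\ddx(x^a y^b z^c u) &= x^{a-1} y^b z^{c-1}\bigl((az-cx)u + xz\,\dudx\bigr),\\
\ddz(x^a y^b z^c u) &= x^{a-1} y^{b-1} z^c\bigl((bx-ay)u + xy\,\dudz\bigr),
\end{aligned}
\]
where the third line follows from the first two and $\ddz=\ddy-\ddx$. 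Each line isolates a first-order operator acting on $u$ together with a monomial prefactor whose parameters are decremented exactly as they appear on the right-hand sides of the corollary.

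The second step is to recognise each first-order operator as a combination of ladder operators. For $\ddy$ this is immediate, since $(bz-cy)u+yz\,\dudy = -\Md{0,1}u$; hence $\ddy(x^ay^bz^c\PPabc) = -x^a y^{b-1}z^{c-1}\Md{0,1}\PPabc$, and the $d=0$ case of the $\Md{0,1}$ relation in Theorem~\ref{thm:ladder} yields the middle formula. For the other two I claim the operator identities (valid for all $u$, at the original parameters with $d=0$)
\[
\begin{aligned}
(\Md{1,0}\Md{0,2}+\M{0,4}\M{6,0})u &= -(2k+b+c+1)\bigl((az-cx)u + xz\,\dudx\bigr),\\
(\Md{1,0}\Md{0,3}-\M{0,5}\M{6,0})u &= (2k+b+c+1)\bigl((bx-ay)u + xy\,\dudz\bigr).
\end{aligned}
\]
Multiplying by the prefactors of the first step reproduces $-(2k+b+c+1)\ddx(x^ay^bz^c u)$ and $(2k+b+c+1)\ddz(x^ay^bz^c u)$. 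Evaluating the left-hand sides on $\PPabc$ via Theorem~\ref{thm:ladder}, tracking the intermediate parameters, gives the two-term right-hand sides: for instance $\Md{0,2}$ sends $(a,b,c,0)\mapsto(a,b,c-1,1)$ and then $\Md{1,0}$ returns $d$ to $0$ while sending $n\mapsto n+1,\ a\mapsto a-1$, producing $(k+c)(n-k+1)P_{n+1,k}^{(a-1,b,c-1)}$; similarly $\M{6,0}$ followed by $\M{0,4}$ produces $(k+1)(n-k+a)P_{n+1,k+1}^{(a-1,b,c-1)}$. I order the compositions with $\M{6,0}$ innermost precisely so that the outer operators see unchanged parameters: $\M{0,4},\M{0,5}$ depend only on $b,c,k$, which $\M{6,0}$ preserves, while $\Md{1,0}$ depends on the parameters only through $a$ and the total $k+a+b+c+d$, both preserved by $\Md{0,2}$ and $\Md{0,3}$.

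The main obstacle is the verification of the two composite operator identities. Each left-hand side is \emph{a priori} a second-order differential operator, so one must check that all second-order terms and all spurious $\dudy$ contributions cancel between the two products, leaving exactly the claimed first-order operator in $\dudx$ (respectively $\dudz$); this is a routine but lengthy expansion. An alternative that avoids most of this computation is to observe that the ladder pairs are formal adjoints with respect to the weighted inner product, so these identities are the weighted-inner-product adjoints of the unweighted relations $(\M{1,0}\M{0,2}+\Md{0,4}\Md{6,0})u=(2k+b+c+1)\dudx$ and $(\M{1,0}\M{0,3}-\Md{0,5}\Md{6,0})u=-(2k+b+c+1)\dudz$ used in the preceding corollary, which explains structurally why the daggered and undaggered operators trade places.
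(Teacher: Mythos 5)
Your proposal is correct and follows essentially the same route as the paper: strip the weight via the product rule, identify the resulting first-order operators with $-\Md{0,1}$ and the composites $\Md{1,0}\Md{0,2}+\M{0,4}\M{6,0}$ and $\Md{1,0}\Md{0,3}-\M{0,5}\M{6,0}$, then read off the coefficients from Theorem~\ref{thm:ladder} with $d=0$. The only (harmless) deviation is that the paper orders the first factor of each composite the other way, as $\Md{0,2}\Md{1,0}$ and $\Md{0,3}\Md{1,0}$; the two orderings coincide because these operators commute (their commutator vanishes identically, and in either order the outer operator's coefficients are unaffected by the inner parameter shift), so your parameter tracking and final right-hand sides agree with the paper's.
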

\begin{proof}
	The first recurrence follows from
$$
\begin{aligned}
	(\Md{0,2}\Md{1,0} + \M{0,4}\M{6,0})u = (2k+b+c+1)(cx -a z -xz \ddx)u\\
	= - (2k+b+c+1) x^{1-a}  z^{1-c} \ddx(x^a  z^c  u).
\end{aligned} 
$$
The second recurrence holds since 
$$
\Md{0,1}u = (cy -b z-y z \ddy)u = -y^{1-b} z^{1-c} \ddx(y^b  z^c u).
$$
The third recurrence is derived from the fact that 
$$
\begin{aligned}
	(\Md{0,3}\Md{1,0} - \M{0,5}\M{6,0})u = (2k+b+c+1)(bx - a y + x y (\ddy - \ddx))u \\
	= (2k+b+c+1) x^{1-a} y^{1-b} z^{-c} \ddz(x^a y^b z^c u).
\end{aligned} 
$$
\end{proof}

\subsection{Conversion}
Recurrence relations for conversion allow us to express $\PPabc$ in terms of Koornwinder polynomials with different parameters. Here, we give the recurrence relations that increment the parameters, which are analogues of~\cite[18.9.3]{DLMF}.  Similar relations can be found in~\cite[Prop.~4.4]{Xu_TA}.
\begin{corollary}
The following recurrence relations hold:
\begin{align}
(2n+a+b+c+2)\PPabc &= (n+k+a+b+c+2){P}_{n,k}^{(a+1,b,c)} \cr
&+ (n+k+b+c+1){P}_{n-1,k}^{(a+1,b,c)},\label{Ca}\\
(2n+a+b+c+2)(2k+b+c+1)\PPabc &= (n+k+a+b+c+2)(k+b+c+1){P}_{n,k}^{(a,b+1,c)} \notag\\
&- (n-k+a)(k+b+c+1){P}_{n-1,k}^{(a,b+1,c)}\notag\\
&+(k+c)(n+k+b+c+1){P}_{n-1,k-1}^{(a,b+1,c)}\notag\\
&-(k+c)(n-k+1){P}_{n,k-1}^{(a,b+1,c)},\label{Cb}\\
(2n+a+b+c+2)(2k+b+c+1)\PPabc &= (n+k+a+b+c+2)(k+b+c+1){P}_{n,k}^{(a,b,c+1)} \notag\\
&- (n-k+a)(k+b+c+1){P}_{n-1,k}^{(a,b,c+1)}\notag\\
&-(k+b)(n+k+b+c+1){P}_{n-1,k-1}^{(a,b,c+1)}\notag\\
&+(k+b)(n-k+1){P}_{n,k-1}^{(a,b,c+1)}.\label{Cc}
\end{align}
\label{cor:conversion}
\end{corollary}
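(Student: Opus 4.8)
The plan is to obtain each of \eqref{Ca}--\eqref{Cc} by assembling a combination of the ladder operators of Definition~\ref{def:Pladder} (and products of them) that collapses, as a differential operator, to multiplication by the scalar appearing on the left-hand side once $d=0$ is imposed; applying that combination to $\PP$ and reading off the right-hand sides via Theorem~\ref{thm:ladder} then yields the stated identity. This mirrors the proofs of the two differentiation corollaries above, where a combination such as $(\M{1,0}\M{0,2}+\Md{0,4}\Md{6,0})u$ was shown to reduce to a single first-order operator.

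The relation \eqref{Ca} is the easy case: it increments only $a$, which lives solely in the outer Jacobi factor $\tilde P_{n-k}^{(2k+b+c+d+1,a)}$ of \eqref{eq:Koornwinder}. First I would verify the operator identity $(\M{3,0}+\M{5,0})u=(2n+a+b+c+d+2)u$, in which the $(1-x)\ddx$ and $y\ddy$ terms cancel outright; setting $d=0$ and applying Theorem~\ref{thm:ladder} to each summand produces $(n+k+a+b+c+2){P}_{n,k}^{(a+1,b,c)}$ and $(n+k+b+c+1){P}_{n-1,k}^{(a+1,b,c)}$, which is exactly \eqref{Ca}.

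The relations \eqref{Cb} and \eqref{Cc} are genuinely four-term because incrementing $b$ (respectively $c$) changes two things at once: the second parameter of the inner factor $\tilde P_k^{(c,b)}$ and the first parameter $2k+b+c+d+1$ of the outer factor. I would therefore build the combination as a product of an outer conversion and an inner conversion. For the outer part the key building blocks are the two identities $\M{2,0}-\Md{4,0}=(2n+a+b+c+d+2)\,\mathrm{Id}$ and $\Md{2,0}-\M{4,0}=(2n+a+b+c+d+2)(1-x)\,\mathrm{Id}$, whose derivative terms again cancel. For the inner part one uses the shifted-Jacobi operators of Definition~\ref{def:JacobiLadder2}, namely $\tL3+\tLd5=(2k+b+c+1)\,\mathrm{Id}$ for \eqref{Cb} (so that $\tL3$ and $\tLd5$ split $(2k+b+c+1)\tilde P_k^{(c,b)}$ into its $\tilde P_k^{(c,b+1)}$ and $\tilde P_{k-1}^{(c,b+1)}$ components), realised on the triangle by $\M{0,3}$ and $\Md{0,5}$; for \eqref{Cc} one uses $\tL2-\tLd4=(2k+b+c+1)\,\mathrm{Id}$, realised by $\M{0,2}$ and $\Md{0,4}$. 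The degree-preserving inner piece couples to the outer conversion and produces the $(n,k)$ and $(n-1,k)$ terms, while the degree-lowering inner piece couples to the $(1-x)$-structure identity and produces the $(n-1,k-1)$ and $(n,k-1)$ terms; the inner coefficients $(k+b+c+1)$ and $(k+c)$ then multiply the respective outer factors to give the four coefficients in \eqref{Cb}, and the full combination should collapse to $(2n+a+b+c+2)(2k+b+c+1)\,\mathrm{Id}$ at $d=0$. Equivalently, I could bypass the triangle operators and derive \eqref{Cb}--\eqref{Cc} directly from \eqref{eq:Koornwinder} by applying these same one-dimensional conversions to the two Jacobi factors separately.

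The hard part will be the entanglement between the inner and outer factors: because the inner degree $k$ also appears in the outer parameter $2k+b+c+d+1$, the $\tilde P_{k-1}^{(c,b+1)}$ piece lives over $(1-x)^{k-1}$ rather than $(1-x)^k$, so reconciling it with the outer polynomials forces the $(1-x)$-structure identity rather than the plain scalar one. Keeping track of these $(1-x)$ weights, which stem from the inner variable $y/(1-x)$, is precisely what makes a naive sum of products fail to be scalar; the crux of the computation is checking that, with the correct pairing and signs, every first- and second-order derivative term cancels and only the claimed scalar multiple of $\PPabc$ survives.
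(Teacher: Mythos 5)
Your proposal is correct and takes essentially the same route as the paper: \eqref{Ca} from the scalar identity $(\M{3,0}+\M{5,0})u=(2n+a+b+c+d+2)u$, and \eqref{Cb}, \eqref{Cc} from pairing the outer identities $(\M{2,0}-\Md{4,0})u=(2n+a+b+c+d+2)u$ and $(\Md{2,0}-\M{4,0})u=(2n+a+b+c+d+2)(1-x)u$ with the inner operators $\M{0,3},\Md{0,5}$ (resp.\ $\M{0,2},\Md{0,4}$), exactly the four-term combinations the paper uses to collapse to $(2k+b+c+1)(2n+a+b+c+d+2)u$.
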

\begin{proof}
The recurrence relation in~\eqref{Ca} follows from the fact that $(\M{30} + \M{50})u = (2n+a+b+c+2)u$ when $d = 0$. Since $(\M{2,0} - \Md{4,0})u = (2n+a+b+c+d+2)u$ and $(\Md{2,0}-\M{4,0})u = (2n+a+b+c+d+2)(1-x)u$, we obtain
$$
(\M{0,2}\M{2,0} - \M{0,2}\Md{4,0} - \Md{0,4}\Md{2,0} +\Md{0,4}\M{4,0})u = (2k+b+c+1)(2n+a+b+c+d+2)u,
$$
The recurrence relation~\eqref{Cc} immediately follows. Similarly,~\eqref{Cb} holds since
$$
(\M{0,3}\M{2,0} - \M{0,3}\Md{4,0} +\Md{0,5}\Md{2,0} - \Md{0,5}\M{4,0})u = (2k+b+c+1)(2n+a+b+c+d+2)u.
$$
\end{proof}

\subsection{Multiplication}
Recurrence relations for multiplication allow one to express $x\PPabc$, $y\PPabc$, and $z\PPabc$ in terms of a sum of Koornwinder polynomials with potentially different parameters. The recurrences in Corollary~\ref{cor:mult} are analogous to the recurrence relations for $\smash{P_n^{(a,b)}}$ found in~\cite[18.9.6]{DLMF}. 
\begin{corollary}
The following recurrence relations hold:
\begin{align}
	(2n+a+b+c+2)  x \PPabc = (n-k+a) {P}_{n,k}^{(a-1,b,c)} +(n-k+1) {P}_{n+1,k}^{(a-1,b,c)}, \label{Mx}
	\end{align}
\begin{align}
 (2k+b+c+1)(2n+a+b+c+2) y \PPabc &= (k+b)(n+k+b+c+1){P}_{n,k}^{(a,b-1,c)}\cr
 &-(k+1)(n-k+a){P}_{n,k+1}^{(a,b-1,c)}\cr
&-(k+b)(n-k+1){P}_{n+1,k}^{(a,b-1,c)}\cr
&+ (k+1)(n+k+a+b+c+2){P}_{n+1,k+1}^{(a,b-1,c)}, 
\label{My}
\end{align}
\begin{align}
	(2 k + b + c+1) (2n + a + b + c + 2)z \PPabc  &= (k + c)(n + k + b + c + 1){P}_{n,k}^{(a,b,c-1)} \cr
	&+  (k + 1)(n - k + a) {P}_{n,k+1}^{(a,b,c-1)}\cr
	& -  (k + c) (n - k + 1){P}_{n+1,k}^{(a,b,c-1)}  \cr
	 &- (k + 1)(n + k + a + b + c + 2){P}_{n+1,k+1}^{(a,b,c-1)}. 
	 \label{Mxy}
\end{align}
\label{cor:mult}
\end{corollary}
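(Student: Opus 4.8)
The plan is to mirror the template used for Corollary~\ref{cor:conversion}: for each of the three relations I will exhibit a combination $\mathcal{T}$ of the ladder operators from Definition~\ref{def:Pladder} that, viewed as a single differential operator, reduces to a scalar multiple of multiplication by $x$, $y$, or $z$, and then I will evaluate $\mathcal{T}\PP$ in two different ways. Reducing $\mathcal{T}$ by the operator identities produces the left-hand side (a scalar times the variable times $\PPabc$, once $d=0$), while expanding $\mathcal{T}$ one factor at a time through Theorem~\ref{thm:ladder} produces the right-hand side as a sum of Koornwinder polynomials; equating the two evaluations yields the stated recurrence. The two facts established in the proof of Corollary~\ref{cor:conversion}, namely $(\M{2,0}-\Md{4,0})u = (2n+a+b+c+d+2)u$ and $(\Md{2,0}-\M{4,0})u=(2n+a+b+c+d+2)(1-x)u$, will be the main engine.

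For \eqref{Mx} the combination is simply $\mathcal{T} = \Md{3,0}+\Md{5,0}$. Adding the two operators in Definition~\ref{def:Pladder}, the first-order terms cancel and one finds $\mathcal{T}u = (2n+a+b+c+d+2)\,x\,u$, which is the left-hand side after setting $d=0$. On the other hand, Theorem~\ref{thm:ladder} gives $\Md{3,0}\PP=(n-k+a)P_{n,k}^{(a-1,b,c,d)}$ and $\Md{5,0}\PP=(n-k+1)P_{n+1,k}^{(a-1,b,c,d)}$, and at $d=0$ these are exactly the two terms on the right of \eqref{Mx}.

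The relations \eqref{My} and \eqref{Mxy} are the substantive cases because $y$ and $z$ couple the two Jacobi factors, so the combination must be a product of an inner ($0,s$) operator with an outer ($s,0$) operator. For \eqref{My} I take $\mathcal{T} = (\Md{2,0}-\M{4,0})\Md{0,3} + (\M{2,0}-\Md{4,0})\M{0,5}$. Applying the two conversion identities to the outer factors collapses $\mathcal{T}u$ to $(2n+a+b+c+d+2)\big[(1-x)\Md{0,3}u + \M{0,5}u\big]$, and a direct computation using $z=1-x-y$ gives the inner identity $(1-x)\Md{0,3}u+\M{0,5}u=(2k+b+c+1)\,y\,u$, so that $\mathcal{T}u = (2n+a+b+c+d+2)(2k+b+c+1)\,y\,u$; this is the left side of \eqref{My} at $d=0$. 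Expanding instead through Theorem~\ref{thm:ladder}, the four products $\Md{2,0}\Md{0,3}$, $\M{2,0}\M{0,5}$, $\M{4,0}\Md{0,3}$, $\Md{4,0}\M{0,5}$ land at the degrees $(n,k)$, $(n+1,k+1)$, $(n+1,k)$, $(n,k+1)$ respectively, all with parameters $(a,b-1,c)$, and reproduce the four terms on the right of \eqref{My}. Relation \eqref{Mxy} is obtained identically after the symmetry $y\leftrightarrow z$, $b\leftrightarrow c$: one takes $\mathcal{T} = (\Md{2,0}-\M{4,0})\Md{0,2} - (\M{2,0}-\Md{4,0})\M{0,4}$ and verifies the companion inner identity $(1-x)\Md{0,2}u-\M{0,4}u=(2k+b+c+1)\,z\,u$.

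The one point that needs care is the bookkeeping of the parameters inside the products, since in a composition each operator acts with the parameters of the polynomial presented to it. The reason the scalar evaluation stays clean is that the two conversion identities are applied to $\Md{0,3}\PP$ and $\M{0,5}\PP$, whose parameters are shifted to $(n,k,a,b-1,c,d+1)$ and $(n+1,k+1,a,b-1,c,d-1)$; in both cases the combination $2n+a+b+c+d$ is invariant under the shift, so the scalar $2n+a+b+c+d+2$ is the same in each group and factors out. Verifying the two inner identities $(1-x)\Md{0,3}u+\M{0,5}u=(2k+b+c+1)y\,u$ and $(1-x)\Md{0,2}u-\M{0,4}u=(2k+b+c+1)z\,u$ is the only genuinely new computation; it is routine once the $(1-x)$ weight is carried through the first-order terms and $z$ is replaced by $1-x-y$, and it is the step I would expect to be the main obstacle.
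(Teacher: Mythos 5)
Your proposal is correct and follows essentially the same route as the paper: equation \eqref{Mx} via $\Md{3,0}+\Md{5,0}$, and \eqref{My}, \eqref{Mxy} via the same four operator products $\Md{2,0}\Md{0,3}$, $\M{4,0}\Md{0,3}$, $\M{2,0}\M{0,5}$, $\Md{4,0}\M{0,5}$ (resp.\ their $\Md{0,2}$, $\M{0,4}$ analogues) with the same signs, differing only in the order of composition --- the paper applies the outer $(s,0)$ factors first, you apply the inner $(0,s)$ factors first and correctly note the invariance of $2n+a+b+c+d+2$ under the induced parameter shifts. Your inner identities $(1-x)\Md{0,3}u+\M{0,5}u=(2k+b+c+1)yu$ and $(1-x)\Md{0,2}u-\M{0,4}u=(2k+b+c+1)zu$ check out.
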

\begin{proof}
The recurrence relation in~\eqref{Mx} follows from the fact that $(\Md{3,0} + \Md{5,0})u = (2n+a+b+c+d+2) x u$. Since
$$
	(\Md{03}\Md{20} - \Md{03}\M{40} +\M{05}\M{20} - \M{05}\Md{40})u = (2k+b+c+1)(2n+a+b+c+d+2) yu
$$
holds, we find that~\eqref{My} is satisfied. Finally,~\eqref{Mxy} follows from
$$
(\Md{02}\Md{20} - \Md{02}\M{40} +\M{04}\Md{40} - \M{04}\M{20})u = (2k+b+c+1)(2n+a+b+c+d+2) zu.
$$
\end{proof}

Combining the recurrence relations in Corollary~\ref{cor:conversion} and Corollary~\ref{cor:mult}, we can derive expressions for $x\PPabc$, $y\PPabc$, and $z\PPabc$ in terms of a sum of Koornwinder polynomials with parameters $(a,b,c)$. These are analogous to the three-term recurrence relation for Jacobi polynomials~\cite[18.9.2]{DLMF}. Since these recurrence relations are long, we refer the reader to~\cite[pp.~80--81]{Dunkl_14_01}.

\subsection{Differential eigenvalue problems}
The polynomials $\PPabc$ are eigenfunctions for second-order differential operators (see~\cite[(5.3.4)]{Dunkl_14_01} and~\cite[Prop.~4.11]{Xu_TA}), and the ladder operators in Section~\ref{sec:ladder} make it easy to derive this fact.  
\begin{theorem}
The polynomial $\PPabc$ satisfies two second-order differential eigenproblems: 
$$
zy \ddyy\PPabc +((1+b)(1-x)-(2+b+c)y) \ddy\PPabc = -k (k+b+c+1) \PPabc
$$
and
$$
\begin{aligned} 
x(1-x) \ddxx\PPabc -2 x y &\ddxy\PPabc + y(1-y) \ddyy\PPabc +(a+1-(a+b+c+3) x) \ddx\PPabc \\
&+ (b+1-(a+b+c+3) y)\ddy\PPabc =  -n(n+a+b+c+2) \PPabc.
\end{aligned} 
$$
\end{theorem}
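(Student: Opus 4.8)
The plan is to realise each of the two operators as a composition---or a short linear combination of compositions---of the ladder operators of Definition~\ref{def:Pladder}, and then to read the eigenvalue directly off the scalar prefactors of Theorem~\ref{thm:ladder}. The one bookkeeping point that must be respected throughout is that a product such as $\Md{0,1}\M{0,1}$ is \emph{not} a single fixed differential operator: the outer factor $\Md{0,1}$ must be instantiated at the parameters of the \emph{intermediate} polynomial produced by $\M{0,1}$. The same subtlety already occurs for Jacobi polynomials, where $\Ld1\L1 P_n^{(a,b)}=n(n+a+b+1)P_n^{(a,b)}$ holds only because $\Ld1$ is taken at parameters $(a+1,b+1)$ to match $\L1 P_n^{(a,b)}\propto P_{n-1}^{(a+1,b+1)}$; getting these shifts right is exactly what makes the prefactors combine into the eigenvalue.

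First I would treat the inner eigenproblem, which involves only $\ddy$ and so must come from the $\M{0,\cdot}$ family. The claim is that it is exactly $-\Md{0,1}\M{0,1}$. Since $\M{0,1}u=\ddy u$, and at the incremented parameters $(b+1,c+1)$ the operator acts as $\Md{0,1}v=(y(c+1)-z(b+1))v-yz\,\ddy v$, the composite applied to $\PPabc$ is $(y(c+1)-z(b+1))\ddy\PPabc - yz\,\ddyy\PPabc$; using $z=1-x-y$ this is precisely the negative of the stated operator. By Theorem~\ref{thm:ladder}, $\M{0,1}$ contributes the factor $(k+b+c+1)$ and then $\Md{0,1}$ contributes $k$, so $\Md{0,1}\M{0,1}\PPabc=k(k+b+c+1)\PPabc$, giving the eigenvalue $-k(k+b+c+1)$.

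Next I would attack the full operator $\mathcal D_2$, whose eigenvalue $-n(n+a+b+c+2)$ depends only on the total degree and which therefore must mix the two factors. I would use $-\Md{1,0}\M{1,0}$ as the backbone: a direct (if lengthy) expansion shows that it already reproduces the coefficients $x(1-x)\,\ddxx$, $-2xy\,\ddxy$, and $(a+1-(a+b+c+3)x)\,\ddx$ exactly---the $k$-dependent pieces coming from the $\tfrac{k}{1-x}$ term of $\M{1,0}$ cancelling against those from $\Md{1,0}$---while Theorem~\ref{thm:ladder} gives it the outer-factor eigenvalue $-(n-k)(n+k+a+b+c+2)$. What is left, namely $\mathcal D_2+\Md{1,0}\M{1,0}$, is a $\ddy$-only operator whose $\ddyy$-coefficient must be corrected from $\tfrac{xy^2}{1-x}$ to $y(1-y)$ (the discrepancy is $\tfrac{yz}{1-x}$) and whose zeroth-order part still carries $a$-dependence; its required eigenvalue is the leftover $-k(k+a+b+c+2)$. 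I would match this remainder by adding a suitable combination of the $\M{0,\cdot}$ products together with the $a$-shifting products (e.g. $\Md{6,0}\M{6,0}$), chosen so that all second-, first-, and zeroth-order coefficients agree and the various $n$-, $k$- and $a$-dependent prefactors telescope to $-n(n+a+b+c+2)$. As an independent check I would separate variables: with $\eta=y/(1-x)$ one has $\PPabc=\tilde P_{n-k}^{(2k+b+c+1,a)}(x)(1-x)^k\tilde P_k^{(c,b)}(\eta)$, the inner equation is the shifted-Jacobi ODE for $\tilde P_k^{(c,b)}$ and the full one is the outer shifted-Jacobi ODE for $\tilde P_{n-k}^{(2k+b+c+1,a)}$ combined with the inner one through the chain rule.

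The main obstacle is the second identity. Matching the principal symbol alone is insufficient: a naive choice (say $-\Md{1,0}\M{1,0}-\Md{0,1}\M{0,1}-\Md{0,3}\M{0,3}$) reproduces the $\ddxx,\ddxy,\ddyy$ coefficients but overshoots the eigenvalue, because the $a$-dependent zeroth-order terms force the inclusion of parameter-shifting products and make the prefactor telescoping delicate. The real work is thus to pin down the exact combination so that the principal, first-order, and zeroth-order parts reconstruct $\mathcal D_2$ \emph{simultaneously} while the scalars collapse to $-n(n+a+b+c+2)$, and to track every parameter shift in the outer operators without error. I would guard against mistakes by verifying the final identity on low-degree cases (e.g. $P_{1,0}^{(a,b,c)}$ and $P_{n,k}^{(a,b,c)}$ with small $n,k$) and against the separated Jacobi form above.
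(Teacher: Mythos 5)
Your treatment of the first eigenproblem is complete and correct: $\Md{0,1}\M{0,1}$, with the outer operator instantiated at the shifted parameters $(a,b+1,c+1)$, is exactly the negative of the stated operator, and Theorem~\ref{thm:ladder} gives the eigenvalue $k(k+b+c+1)$. This is essentially the paper's route for that identity (the paper uses the opposite order $\M{0,1}\Md{0,1}$, which costs an extra additive constant $b+c$; your order avoids it).

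For the second eigenproblem the proof is not finished. You correctly identify $-\Md{1,0}\M{1,0}$ as the backbone, correctly note that it reproduces the $\ddxx$, $\ddxy$ and $\ddx$ coefficients with eigenvalue $-(n-k)(n+k+a+b+c+2)$, and correctly compute that the remainder must contribute $-k(k+a+b+c+2)$; but you then only promise to find ``a suitable combination'' of further ladder products and never exhibit it --- by your own account the real work is to pin down that combination, and that work is not done; numerical spot checks on low-degree cases do not substitute for it. The gap is closable, and more easily than your plan suggests: carrying out the expansion of $\Md{1,0}\M{1,0}$ that you allude to, the remainder (the stated second operator plus $\Md{1,0}\M{1,0}$) comes out as
$$
\frac{1}{1-x}\Bigl(zy \ddyy + ((1+b)(1-x)-(2+b+c)y)\ddy\Bigr) \;+\; \frac{k\bigl(x(k+a+b+c+2)-a-1\bigr)}{1-x},
$$
i.e.\ $(1-x)^{-1}$ times the first-eigenproblem operator plus an explicit multiplication operator; applying the first identity, the numerator collapses to $-k(1-x)(k+a+b+c+2)$ and the remainder acts on $\PPabc$ as $-k(k+a+b+c+2)$, with no further ladder products needed. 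This hybrid structure --- the first operator divided by $1-x$ plus a piece built from $x$-direction ladder pairs --- is also how the paper proceeds, with $\M{3,0}\Md{3,0}+\M{4,0}\Md{4,0}$ playing the role of your $-\Md{1,0}\M{1,0}$.
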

\begin{proof}
The first equation follows by considering $\M{0,1}\Md{0,1}$ and the second from
$$
\frac{zy \ddyy +((1+b)(1-x)-(2+b+c)y)\ddy}{1-x} + \M{3,0}\Md{3,0} + \M{4,0}\Md{4,0}.
$$
\end{proof}

\section*{Conclusion}
We introduce ladder operators for systematically deriving sparse recurrence relations for differentiation, conversion, and multiplication of Jacobi and Koornwinder polynomials.

\appendix

\section{Derivatives of Jacobi and Koornwinder polynomials}
In Section~\ref{sec:ladder}, it is useful to be able to express partial derivatives of $\PP$ as derivatives of shifted Jacobi polynomials. The following proposition is employed in the proof of Theorem~\ref{thm:ladder}.
\begin{proposition}
	The following relationships hold:
\begin{align}
	\tilde P_{n-k}^{(2k+b+c+d+1,a)}(x)(1-x)^k\left[\tilde  P_k^{(c,b)}\right]'\!\left(\tfrac{y}{1-x}\right) & = (1-x) \ddy \P, \label{JJp}\\
	\left[\tilde P_{n-k}^{(2k+b+c+d+1,a)}\right]'\!\!(x)(1-x)^{k+1} \tilde P_k^{(c,b)}\!\left(\tfrac{y}{1-x}\right) & = \left(k + (1 - x) \ddx - 
 y  \ddy\right) \!\P.  \label{JpJ}
\end{align}
\end{proposition}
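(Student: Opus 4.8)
The plan is to treat both identities as direct consequences of the chain and product rules applied to the explicit product form of $\P$ in~\eqref{eq:Koornwinder}. Write $\P = A(x)\,(1-x)^k\, B\!\left(\tfrac{y}{1-x}\right)$, where $A(x) = \tilde P_{n-k}^{(2k+b+c+d+1,a)}(x)$ is the first shifted Jacobi factor and $B(s) = \tilde P_k^{(c,b)}(s)$ is the second, evaluated at $s = y/(1-x)$. No special properties of Jacobi polynomials are needed; both identities are purely differential consequences of the coordinate change $(x,y)\mapsto(x,\,y/(1-x))$.

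For~\eqref{JJp} I would simply differentiate in $y$. Since $y$ enters only through $s = y/(1-x)$ with $\partial s/\partial y = 1/(1-x)$, the chain rule gives $\ddy \P = A(x)(1-x)^{k-1} B'\!\left(\tfrac{y}{1-x}\right)$, and multiplying by $(1-x)$ reproduces the left-hand side of~\eqref{JJp} at once.

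The identity~\eqref{JpJ} requires a little more care. Differentiating in $x$ produces three terms by the product rule: one from $A'(x)$, one from the derivative of $(1-x)^k$, and one from $B$ through the $x$-dependence of $s$, for which $\partial s/\partial x = y/(1-x)^2$. I would then multiply the whole expression by $(1-x)$. The middle term collapses to $-k\P$, and the third term, after using~\eqref{JJp} to rewrite $A(x)(1-x)^k B'(s)$ as $(1-x)\ddy\P$, becomes exactly $y\,\ddy\P$. Rearranging to isolate the $A'(x)$ contribution then yields $\left(k + (1-x)\ddx - y\ddy\right)\P$ on the right, as claimed.

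The only potential pitfall is bookkeeping: keeping careful track of the powers of $(1-x)$ and correctly evaluating $\partial s/\partial x = y/(1-x)^2$, so that the factor of $(1-x)$ introduced by multiplying through precisely cancels the extra denominator and the cross-derivative term lands as $y\,\ddy\P$ rather than some other multiple. Since~\eqref{JJp} is established first, it can be invoked directly to handle that cross term in~\eqref{JpJ}, which is what makes the second computation clean.
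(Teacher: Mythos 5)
Your proposal is correct and follows essentially the same route as the paper: \eqref{JJp} is an immediate chain-rule computation in $y$, and \eqref{JpJ} is obtained by applying $(1-x)\ddx$ to the product form, identifying the middle term as $-k\P$, and using \eqref{JJp} to convert the cross term into $y\ddy\P$. The bookkeeping you flag ($\partial s/\partial x = y/(1-x)^2$ and the resulting powers of $1-x$) works out exactly as you describe.
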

\begin{proof}
The first relationship is immediate. The second relationship follows from the chain-rule:
$$
(1-x) \ddx \!\!\left(\! f(x) (1-x)^k g\!\left(\!\tfrac{y}{1 - x}\right)\!\right) \!=\! f'(x) (1-x)^{k+1} g\!\left(\tfrac{y}{1 - x}\right) -k f(x) (1-x)^k g(x) + y f(x) (1-x)^{k-1} g'\!\left(\tfrac{y}{1 - x}\right)
$$
and an application of~\eqref{JJp} to simplify the last term.  
\end{proof}

\end{document}